\theoremstyle{plain}
\newtheorem{theorem}{Theorem}
\newtheorem{corollary}[theorem]{Corollary}
\newtheorem{lemma}[theorem]{Lemma}
\theoremstyle{definition}
\newtheorem{definition}[theorem]{Definition}
\newtheorem{example}[theorem]{Example}
\theoremstyle{remark}
\newtheorem{remark}[theorem]{Remark}
\numberwithin{theorem}{section}
\newcommand{\card}[1]{\mbox{\rm{card}}(#1)}
\newcommand{\mSpec}[1]{\mbox{\rm{mSpec}}(#1)}
\newcommand{\Soc}[1]{\mbox{\rm{Soc}}(#1)}
\newcommand{\im}[1]{\mbox{\rm{Im}}(#1)}
\newcommand{\cf}[1]{\mbox{\rm{cf}}(#1)}
\newcommand{\Hom}[3]{\mbox{\rm{Hom}}_{#1}(#2,#3)}
\newcommand{\Ext}[4]{\mbox{\rm{Ext}}^{#1}_{#2}(#3,#4)}
\newcommand{\rmod}[1]{\mbox{\rm{Mod}--}{#1}}
\begin{document}

\title{Faith's problem on $R$-projectivity is undecidable}
\author{\textsc{Jan Trlifaj}}
\address{Charles University, Faculty of Mathematics and Physics, Department of Algebra \\
Sokolovsk\'{a} 83, 186 75 Prague 8, Czech Republic}
\email{trlifaj@karlin.mff.cuni.cz}

\date{\today}
\subjclass[2010]{Primary: 16D40, 03E35. Secondary: 16E30, 16E50, 03E45, 18G05.}
\keywords{Baer Criterion, Jensen-functions, non-perfect rings, projective module, $R$-projective module.}
\dedicatory{In memory of Gena Puninski.}
\thanks{Research supported by GA\v CR 17-23112S}
\begin{abstract} In \cite{F}, Faith asked for what rings $R$ does the Dual Baer Criterion hold in $\rmod R$, that is, when does $R$-projectivity imply projectivity for all right $R$-modules? Such rings $R$ were called right testing. Sandomierski proved that if $R$ is right perfect, then $R$ is right testing. Puninski et al.\ \cite{AIPY} have recently shown for a number of non-right perfect rings that they are not right testing, and noticed that \cite{T2} proved consistency with ZFC of the statement {\lq}each right testing ring is right perfect{\rq} (the proof used Shelah's uniformization). 

Here, we prove the complementing consistency result: the existence of a right testing, but not right perfect ring is also consistent with ZFC (our proof uses Jensen-functions). Thus the answer to the Faith's question above is undecidable in ZFC. We also provide examples of non-right perfect rings such that the Dual Baer Criterion holds for {\lq}small{\rq} modules (where {\lq}small{\rq} means countably generated, or $\leq 2^{\aleph_0}$-presented of projective dimension $\leq 1$). 

\end{abstract}

\maketitle

\section{Introduction}

The classic Baer Criterion for Injectivity \cite{B} says that a (right $R$-) module $M$ is injective, if and only if it is \emph{$R$-injective}, that is, each homomorphism from any right ideal $I$ of $R$ into $M$ extends to $R$. This criterion is the key tool for classification of injective modules over particular rings. 

A module $M$ is called \emph{$R$-projective} provided that each homomorphism from $M$ into $R/I$ where $I$ is any right ideal, factors through the canonical projection $\pi : R \to R/I$ \cite[p.184]{AF}. One can formulate the \emph{Dual Baer Criterion} as follows: a module $M$ is projective, if and only if it is $R$-projective. The rings $R$ such that this criterion holds true are called right \emph{testing}, \cite[Definition 2.2]{AIPY}.  

Dualizations are often possible over perfect rings. Indeed, Sandomierski proved that each right perfect ring is right testing \cite{S}. The question of existence of non-right perfect right testing rings is much harder. Faith \cite[p.175]{F} says that {\lq\lq}the characterization of all such rings is still an open problem{\rq\rq} -- we call it the Faith's problem here. 

Note that if $R$ is not right perfect, then it is consistent with ZFC + GCH that $R$ is not right testing. Indeed, as observed in \cite{AIPY}, \cite[Lemma 2.4]{T2} (or \cite{T1}) implies that there is a $\kappa^+$-presented module $N$ of projective dimension $1$ such that $\Ext 1RN{I} = 0$ for each right ideal $I$ of $R$ (and hence $N$ is $R$-projective, but not projective) in the extension of ZFC satisfying GCH and Shelah's Uniformization Principle UP$_\kappa$ for an uncountable cardinal $\kappa$ such that $\card R < \kappa$ and $\cf{\kappa} = \aleph_0$. In particular, attempts \cite{DHH} to prove the existence of non-right perfect testing rings in ZFC could not be successful. 

Moreover, in the extension of ZFC + GCH satisfying UP$_\kappa$ for all uncountable cardinals $\kappa$ such that $\cf{\kappa} = \aleph_0$ \cite{ES}, all right testing rings are right perfect. So it is consistent with ZFC + GCH that all right testing rings are right perfect.  

For many non-right perfect rings $R$, one can actually prove that $R$ is not right testing in ZFC: this is the case for all commutative noetherian rings \cite[Theorem 1]{H}, all semilocal right noetherian rings \cite[Proposition 2.11]{AIPY}, and all commutative domains (see Lemma \ref{domains} below). 

It is easy to see that all finitely generated $R$-projective modules are projective, that is, the Dual Baer Criterion holds for all finitely generated modules over any ring. So in order to find examples of $R$-projective modules which are not projective, one has to deal with infinitely generated modules. The task is quite complex in general: in Section \ref{inquiry}, we will show that there exist non-right perfect rings such that the Dual Baer Criterion holds for all countably generated modules, or for all $\leq 2^{\aleph_0}$-presented modules of projective dimension $\leq 1$. 

Some questions related to the vanishing of Ext, such as the Whitehead problem, are known to be undecidable in ZFC, cf.\ \cite{EM}. In Section \ref{independence}, we will prove that this is also true of the existence of non-right perfect right testing rings. To this purpose, we will employ G\" odel's Axiom of Constructibility V = L, or rather its combinatorial consequence, the existence of Jensen-functions (see \cite[\S VI.1]{EM} and \cite[\S 18.2]{GT}). Our main result, Theorem \ref{independence} below, says that the existence of Jensen-functions implies that a particular subring of $K^\omega$ (where $K$ is a field of cardinality $\leq 2^\omega$) is testing, but not perfect. 

For unexplained terminology, we refer the reader to \cite{AF}, \cite{EM}, \cite{GT} and \cite{G}.  

\section{$R$-projectivity versus projectivity}\label{inquiry}

It is easy to see that for each $R$-projective module $M$, each submodule $I \subseteq R^n$ and each $f \in \Hom RM{R^n/N}$, there exists $g \in \Hom RM{R^n}$ such that $f = \pi_N g$ where $\pi _N : R^n \to R^n/N$ is the projection (see e.g.\ \cite[Proposition 16.12(2)]{AF}). In particular, all finitely generated $R$-projective modules are projective. 

This not true of countable generated $R$-projective modules in general - for example, by the following lemma, the abelian group $\mathbb Q$ is $R$-projective, but not projective:

\begin{lemma}\label{domains} Let $R$ be a commutative domain. Then each divisible module is $R$-projective. So $R$ is testing, iff $R$ is a field.
\end{lemma}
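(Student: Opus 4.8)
The plan is to prove the two assertions separately. For the first assertion, let $R$ be a commutative domain with quotient field $Q$, and let $D$ be a divisible $R$-module. I want to show that every homomorphism $f \colon D \to R/I$ factors through $\pi \colon R \to R/I$. The key observation is that $R/I$ is a torsion module whenever $I \neq 0$ (since for $0 \neq a \in I$, every element of $R/I$ is killed by $a$), while $D$ being divisible and $R$ a domain means that the image $f(D)$ is a divisible submodule of $R/I$. So the first step is to check that a divisible submodule of $R/I$ must be zero when $I \neq 0$: indeed if $0 \neq a \in I$, then for $x \in f(D)$ divisibility gives $y$ with $ay = x$ inside $f(D) \subseteq R/I$, but $a$ annihilates $R/I$, so $x = ay = 0$. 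Hence $f = 0$, which trivially factors through $\pi$. When $I = 0$ the map $\pi$ is an isomorphism and the factorization is automatic. So $D$ is $R$-projective.

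For the second assertion, the ``if'' direction is immediate: if $R$ is a field then every module is free, hence projective, so $R$ is testing. For the ``only if'' direction, suppose $R$ is not a field; I must produce an $R$-projective module that is not projective. Take $D = Q$, the quotient field of $R$, viewed as an $R$-module. It is divisible, hence $R$-projective by the first part. It remains to argue $Q$ is not projective as an $R$-module. Since $R$ is not a field, $Q \neq R$; a projective module over a commutative domain is torsion-free (clear, as it embeds in a free module) but also, being a submodule of a free module that is divisible, any nonzero projective-and-divisible module would force $R$ to be a field — more concretely, $Q$ is not finitely generated over $R$ (as $R$ is not a field, e.g.\ pick $0 \neq r \in R$ non-unit; no finite set of fractions generates $1/r^n$ for all $n$), yet a projective module that is ``uniform''/rank one over a domain, if finitely generated, would be isomorphic to an ideal, and if not finitely generated one uses that $Q$ is a directed union of cyclic submodules $\frac1s R$ each isomorphic to $R$, and a nonzero projective module has a nonzero map to $R$ while $\Hom R Q R = 0$ because any such map would send the divisible module $Q$ to the reduced module $R$. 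That last point — $\Hom_R(Q, R) = 0$ — is the cleanest route: a projective nonzero module $P$ is a direct summand of a free module $F$, so there is a surjection $F \to P$ and hence a nonzero composite $P \hookrightarrow F \twoheadrightarrow R$ onto some coordinate (after possibly composing with a coordinate projection, nonzero on the summand); but $Q$ divisible and $R$ reduced (a domain that is not a field has $\bigcap_n r^nR = 0$ for a non-unit $r$, so no nonzero divisible submodule) forces every homomorphism $Q \to R$ to be zero, contradiction.

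The main obstacle I anticipate is making the non-projectivity argument for $Q$ fully rigorous and self-contained: one needs the small lemma that a commutative domain which is not a field admits no nonzero divisible submodule inside a free (equivalently, torsion-free reduced) module, and one needs to ensure a nonzero projective module really does admit a nonzero map to $R$ (splitting off a free summand / coordinate projection argument). Both are standard, but I would state them crisply. An alternative, perhaps slicker, route for non-projectivity: if $Q$ were projective it would be a direct summand of $R^{(\kappa)}$ for some $\kappa$; projecting, one gets $Q$ as a summand of $R^{(\kappa)}$, and since $Q$ is torsion-free of rank $1$ and ``divisible'', comparing with the fact that $R^{(\kappa)}$ is reduced yields the contradiction directly. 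Either way the heart of the matter is the dichotomy ``divisible vs.\ reduced'' over a non-field domain, and I would organize the proof around that.

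Putting it together: (1) divisible $\Rightarrow$ image in any $R/I$ is a divisible submodule of a torsion (or whole) module, hence either zero or the map is an iso-factorization, giving $R$-projectivity; (2) if $R$ is a field, testing is clear; if not, $Q$ witnesses the failure of the Dual Baer Criterion since it is $R$-projective (by (1)) but not projective (no nonzero map to $R$, while any nonzero projective has one). I expect the write-up to be short, with the only genuinely delicate point being the clean justification that $\Hom_R(Q,R)=0$ and that this suffices to rule out projectivity of $Q$.
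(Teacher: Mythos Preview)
Your argument for the first assertion --- that $\Hom RD{R/I}=0$ for every nonzero ideal $I$ because the image of a divisible module is divisible while $R/I$ is annihilated by any nonzero element of $I$ --- is exactly the paper's argument.

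For the second assertion your route differs from the paper's. The paper argues: if $R$ is testing, then by the first assertion every divisible (in particular every injective) module is projective, hence $R$ is quasi-Frobenius; a commutative QF domain is artinian, hence a field. You instead exhibit the explicit witness $Q$, showing it is $R$-projective but not projective via $\Hom RQR=0$ together with the observation that every nonzero projective module admits a nonzero map to $R$. Your approach is more elementary (no appeal to the Faith characterization of QF rings) and gives a concrete counterexample; the paper's is shorter once one is willing to quote that characterization.

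One genuine wrinkle: your parenthetical justification that ``a domain which is not a field has $\bigcap_n r^nR=0$ for a non-unit $r$'' is false in general --- take a valuation domain whose value group is $\mathbb Z\times\mathbb Z$ ordered lexicographically and $r$ of value $(0,1)$; then any element of value at least $(1,0)$ lies in $\bigcap_n r^nR$. The conclusion you actually need (that $R$ has no nonzero divisible ideal, hence $\Hom RQR=0$) is nevertheless correct and immediate: if $0\neq d$ lies in a divisible ideal of $R$ then in particular $d^2\mid d$, so $d$ is a unit by cancellation, forcing the ideal to be all of $R$ and $R$ to be a field. Replace the Krull-intersection remark with this two-line argument and your proof is complete.
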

\begin{proof} Assume $R$ is testing and possesses a non-trivial ideal $I$. Let $M$ be any divisible module.  If $0 \neq \Hom RM{R/I}$, then $R/I$ contains a non-zero divisible submodule of the form $J/I$ for an ideal $I \subsetneq J \subseteq R$. Let $0 \neq r \in I$. The $r$-divisibility of $J/I$ yields $Jr + I = J$, but $Jr \subseteq I$, a contradiction. So $\Hom RM{R/I} = 0$, and $M$ is projective. In particular, each injective module is projective, so $R$ is a commutative QF-domain, hence a field.   
\end{proof}

However, there do exist rings such that all countably generated $R$-projective modules are projective. We will now examine one such class of rings that will be relevant for proving the independence result in Section \ref{Faith}:

\begin{definition}\label{bergman} Let $K$ be a field, and $R$ the unital $K$-subalgebra of $K^\omega$ generated by $K^{(\omega)}$. In other words, $R$ is the subalgebra of $K^\omega$ consisting of all eventually constant sequences in $K^\omega$. 

For each $i < \omega$, we let $e_i$ be the idempotent in $K^\omega$ whose $i$th component is $1$ and all the other components are $0$. Notice that $\{ e_i \mid i < \omega \}$ is a set of pairwise orthogonal idempotents in $R$, so $R$ is not perfect. 
\end{definition}

First, we note basic ring and module theoretic properties of this particular setting:

\begin{lemma}\label{basic}  Let $R$ be as in Definition \ref{bergman}.
\begin{enumerate}
\item $R$ is a commutative von Neumann regular semiartinian ring of Loewy length $2$, with $\Soc R = \sum_{i < \omega} e_i R = K^{(\omega)}$ and $R/\Soc R \cong K$.
\item  If $I$ is an ideal of $R$, then either $I = I_A = \sum_{i \in A} e_i R$ for a subset $A \subseteq \omega$ and $I$ is semisimple and projective, or else $I = fR$ for an idempotent $f \in R$ such that $f$ is eventually $1$. In particular, $R$ is hereditary.
\item $\{ e_iR \mid i < \omega \} \cup \{ S \}$ is a representative set of all simple modules, where $S = R/\Soc{R}$. All these modules are $\sum$-injective, and all but $S$ are projective. 
\item Let $M \in \rmod R$. Then there are unique cardinals $\kappa$, $\kappa_i$ ($i < \omega$) and $\lambda$ such that $M \cong S^{(\kappa)} \oplus N$, $\Soc N \cong \bigoplus_{i < \omega} (e_iR)^{(\kappa_i)}$, and $N/\Soc{N} \cong S^{(\lambda)}$.  

If $N = R^{(\mu)}/I$, then 
$$\Soc N = (\Soc{R^{(\mu)}} + I)/I \cong \Soc{R^{(\mu)}}/(\Soc{R^{(\mu)}} \cap I)$$ and $N/\Soc N \cong R^{(\mu)}/(\Soc{R^{(\mu)}} + I)$. Hence for each $i < \omega$, $\kappa_i$ is the codimension of the $e_iR$-homogenous component of $\Soc{R^{(\mu)}} \cap I$ in $\Soc{R^{(\mu)}}$, while $\lambda$ is the codimension of  $(\Soc{R^{(\mu)}} + I)/\Soc{R^{(\mu)}}$ in $R^{(\mu)}/\Soc{R^{(\mu)}} \cong S^{(\mu)}$.
\end{enumerate}
\end{lemma}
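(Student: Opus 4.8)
The plan is to analyze the ring $R$ of eventually constant sequences in $K^\omega$ directly, exploiting its very transparent ideal structure. Most of the work is bookkeeping, so I will organize it around the four numbered claims.

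\medskip

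\textbf{Step 1: von Neumann regularity, semiartinian, Loewy length.} First I would observe that $R$ is von Neumann regular: given $a = (a_i)_{i<\omega} \in R$ eventually equal to the constant $c$, the sequence $b = (b_i)$ with $b_i = a_i^{-1}$ when $a_i \neq 0$ and $b_i = 0$ otherwise is eventually equal to $c^{-1}$ (or $0$), hence lies in $R$, and $aba = a$. Commutativity is inherited from $K^\omega$. For the socle, I would identify $\Soc R$ with $\sum_{i<\omega} e_i R$: each $e_i R \cong K$ is simple, and conversely any element of $R$ whose support is infinite generates an ideal containing some $e_j R$ with quotient still containing torsion, so is not in the socle; a short computation shows $\Soc R = K^{(\omega)}$. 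Then $R/\Soc R$ is the ring of "germs at infinity" of eventually constant sequences, which is exactly $K$ (the map sending a sequence to its eventual value has kernel $K^{(\omega)}$). Since $R/\Soc R \cong K$ is semisimple, $R$ is semiartinian of Loewy length exactly $2$ (length $1$ is excluded because $R$ itself is not semisimple, having infinitely many orthogonal idempotents).

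\medskip

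\textbf{Step 2: classification of ideals and hereditariness.} Let $I$ be an ideal and let $A = \{ i < \omega : e_i \notin I \text{ fails}\}$, i.e. $A = \{ i : e_i \in I\}$. Set $I_A = \sum_{i \in A} e_i R$. I would split into two cases. If $I \subseteq \Soc R = K^{(\omega)}$, then $I$ is a $K$-subspace of $K^{(\omega)}$ closed under multiplication by $K^\omega$-like diagonal action; since the $e_i R$ are the homogeneous components, $I = I_A$ with $A = \{ i : e_i \in I\}$, and this is semisimple, hence projective (a direct summand of $R^{(\omega)}$... more precisely a direct sum of the projective simples $e_i R$). If $I \not\subseteq \Soc R$, pick $a \in I$ not eventually $0$, say eventually equal to $c \neq 0$; multiplying by the regular inverse $b$ from Step 1 we get an idempotent $f = ab \in I$ that is eventually $1$. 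Then $1 - f \in \Soc R$ is a finite sum of $e_i$'s, and I would check $I = fR + (\text{the } e_i R \text{ for } i \text{ with } e_i \in I)$; absorbing those finitely many $e_i$ (those with $i$ in the support of $1-f$) into $f$ by enlarging $f$ to an idempotent $f'$ that is $1$ on all those coordinates too, one gets $I = f'R$ with $f'$ eventually $1$. So every ideal is either semisimple projective or principal generated by an idempotent; both are projective, hence $R$ is hereditary (right, equivalently left, since commutative).

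\medskip

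\textbf{Step 3: simple modules and their injectivity.} The modules $e_i R \cong K$ are simple and projective (direct summands of $R$). The remaining simple module is $S = R/\Soc R \cong K$ as a ring but as an $R$-module it is the "eventual-value" module; it is not projective since $\Soc R$ is not a direct summand (if it were, $R$ would be semisimple). That $\{e_i R\} \cup \{S\}$ is a complete irredundant list follows because a simple module is $R/\mathfrak m$ for a maximal ideal $\mathfrak m$, and the maximal ideals are exactly $\Soc R$ together with the $\sum_{j \neq i} e_j R + (1-e_i)R$-type ideals (kernels of evaluation at $i$), giving $e_i R$. For $\sum$-injectivity: since $R$ is semiartinian of Loewy length $2$ and von Neumann regular, every simple is injective iff $R$ is a V-ring; here one can check each simple $T$ has $\Ext 1R{R/J}{T} = 0$ for all ideals $J$ using the ideal classification from Step 2 (the relevant short exact sequences split because the kernels are themselves projective or the extension is by a semisimple). $\Sigma$-injectivity then follows since $R$ is noetherian? — $R$ is not noetherian, so instead I would argue directly: a direct sum of copies of $e_i R$ or of $S$ is again injective because $R$ has Loewy length $2$ and any such sum is again semiartinian with the right socle/top structure, or more simply invoke that over a commutative von Neumann regular semiartinian ring of finite Loewy length, the injective simples are $\Sigma$-injective (arguing via the structure of injective hulls, which here are the simples themselves).

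\medskip

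\textbf{Step 4: module structure.} For $M \in \rmod R$, since $R$ has Loewy length $2$ we have the Loewy series $0 \subseteq \Soc M \subseteq M$ with $M/\Soc M$ semisimple. Now $\Soc M$ is a semisimple module, hence a direct sum of copies of the simples $e_i R$ and $S$; say its $S$-homogeneous part is $S^{(\kappa)}$ and its $e_i R$-part is $(e_i R)^{(\kappa_i)}$. Because $S$ is injective (Step 3), the summand $S^{(\kappa)}$ is $\Sigma$-injective, hence splits off: $M \cong S^{(\kappa)} \oplus N$ where $\Soc N = \bigoplus_{i<\omega}(e_i R)^{(\kappa_i)}$ has no $S$-part. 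Then $N/\Soc N$ is semisimple, and I claim it is $S$-homogeneous: any $e_i R$-summand in the top would lift to a copy of the projective module $e_i R$ inside $N$ (projectivity of $e_i R$), contradicting $\Soc N$ capturing all $e_i R$-parts — more carefully, a submodule isomorphic to $e_i R$ maps onto that top summand and splits, pushing an $e_i R$ into $\Soc N$, which is fine, but then it would not actually be in the top; the cleanest argument is that $\Soc N$ is essential in $N$ and $e_i R$ is both projective and injective, so any copy of $e_i R$ in $N/\Soc N$ would lift and split, forcing $\kappa_i$ to have been larger — I would phrase this as: $N$ has no nonzero projective semisimple quotient, hence $N/\Soc N \cong S^{(\lambda)}$. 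Uniqueness of $\kappa, \kappa_i, \lambda$ follows from Krull–Schmidt-type uniqueness of homogeneous components of semisimple modules (dimensions of $\Hom$ spaces over the division rings $\End(e_i R) \cong K$ and $\End(S) \cong K$). For the final displayed formulas with $N = R^{(\mu)}/I$: the socle of a quotient $P/I$ with $P$ projective semiartinian of Loewy length $\le 2$ is $(\Soc P + I)/I$ since $\Soc(P/I) \supseteq (\Soc P + I)/I$ always, and the reverse holds because $P/(\Soc P + I)$ has zero socle (its socle would come from $\Soc P$). The isomorphism $(\Soc P + I)/I \cong \Soc P/(\Soc P \cap I)$ is the second isomorphism theorem, and $N/\Soc N \cong P/(\Soc P + I)$ follows. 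Then $\kappa_i$ and $\lambda$ are read off as the stated codimensions by decomposing $\Soc{R^{(\mu)}} = \bigoplus_{i<\omega}(e_i R)^{(\mu)}$ into homogeneous components and $R^{(\mu)}/\Soc{R^{(\mu)}} \cong S^{(\mu)}$.

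\medskip

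\textbf{Main obstacle.} The only genuinely delicate point is the $\Sigma$-injectivity of the simple modules (and using it to split off $S^{(\kappa)}$ in Step 4), together with verifying $N/\Soc N$ has no projective simple summand; everything else is direct computation with eventually constant sequences and the Loewy structure. I would spend most care making the splitting argument in Step 4 airtight, since the decomposition $M \cong S^{(\kappa)} \oplus N$ and the purity of the claim "$\Soc N$ contains all $e_i R$-parts" is what makes the uniqueness statement meaningful.
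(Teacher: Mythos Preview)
Your approach mirrors the paper's closely: parts (1)--(3) are essentially the same arguments, and the paper simply cites \cite[Proposition~6.18]{G} for the $\Sigma$-injectivity of the simples rather than attempting an ad~hoc justification. Your attempts at $\Sigma$-injectivity are incomplete, but the fact is standard once injectivity is known: over a commutative ring an injective simple module $T$ is automatically $\Sigma$-injective, since the only annihilator right ideals of $T$ are $R$ and a single maximal ideal, so the Faith criterion (ACC on annihilators) is trivial.

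There is, however, one genuine slip in Step~4. You assert that for $P = R^{(\mu)}$ the quotient $P/(\Soc P + I)$ ``has zero socle (its socle would come from $\Soc P$)''. This is false: $P/(\Soc P + I)$ is a quotient of $P/\Soc P \cong S^{(\mu)}$, hence is semisimple and equals its own socle. In fact the equality $\Soc(P/I) = (\Soc P + I)/I$ fails in general: take $\mu = 1$ and $I = \Soc R$, so $N = S$; then $(\Soc R + I)/I = 0$ while $\Soc N = S$. The displayed formula in the lemma is correct only because $N$ is, by construction, the complement of the $S$-homogeneous part $S^{(\kappa)}$, so $\Soc N$ contains no copy of $S$. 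The argument (implicit in the paper's one-line justification) is then: any simple submodule $T \subseteq N$ is isomorphic to some $e_jR$, whence $T = e_jT \subseteq e_jN = (e_jR^{(\mu)} + I)/I \subseteq (\Soc{R^{(\mu)}} + I)/I$; equivalently, since the top $N/((\Soc{R^{(\mu)}}+I)/I)$ is a direct sum of copies of $S$, no $e_jR$-type simple can lie outside $(\Soc{R^{(\mu)}}+I)/I$. Once you replace your ``zero socle'' claim with this observation, the rest of your Step~4 goes through and matches the paper.
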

\begin{proof} (1) Clearly, $R$ is commutative, and if $r \in R$, then all non-zero components of $r$ are invertible in $K$, so there exists $s \in R$ with $rsr = r$, i.e., $R$ is von Neumann regular. 

For each $i < \omega$, $e_iR = e_i K^\omega$ is a simple projective module, whence $J = \sum_{i < \omega} e_i R \subseteq \Soc R$. Moreover, $R/J \cong K$ is a simple non-projective module. So $R$ is semiartinian of Loewy length $2$, and $J = \Soc R$ is a maximal ideal of $R$.    

(2) If $I \subseteq \Soc R$, then $I$ is a direct summand in the semisimple projective module $\Soc R$. Since the simple projective modules $\{ e_iR \mid i < \omega \}$ are pairwise non-isomorphic, $I \cong I_A = \sum_{i \in A} e_i R$, and hence $I = I_A$, for a subset $A \subseteq \omega$.

If $I \nsubseteq \Soc R$, then there is an idempotent $e \in I \setminus \Soc R$ and $eR + \Soc R = R$. Note that $e$ is eventually $1$, so 
in particular, $eR \supseteq \sum_{i \in B} e_i R$ where $B \subseteq \omega$ is the (cofinite) set of all indices $i$ such that the $i$th component of $e$ is $1$. Then $I = eR \oplus (\sum_{i \notin B} e_i R \cap I)$. The latter direct summand equals $I_A$ for a (finite) subset $A \subseteq \omega \setminus B$, and $I = fR$ for the idempotent $f = e + \sum_{i \in A} e_i$.         

In either case, $I$ is projective, hence $R$ is hereditary.

(3) By part (2), the maximal spectrum $\mSpec R = \{ I_\omega \} \cup \{ (1 - e_i)R \mid i < \omega \}$. The $\sum$-injectivity of all simple modules follows from part (1) and \cite[Proposition 6.18]{G}. The simple module $S$ is not projective because $I_\omega$ is not finitely generated.  

(4) These (unique) cardinals are determined as follows: $\kappa$ is the dimension of the $S$-homogenous component of $M$, and $\kappa_i$ the dimension of its $e_iR$-homogenous component ($i < \omega$). The semisimple module $\bar M = M/\Soc M \cong N/\Soc N$ is isomorphic to a direct sum of copies of the unique non-projective simple module $S$; $\lambda$ is the ($S$-) dimension of $\bar M$.  

The final claim follows from the fact that $P = (\Soc{R^{(\mu)}} + I)/I$ is a direct sum of projective simple modules, while  $R^{(\mu)}/(\Soc{R^{(\mu)}} + I)$ a direct sum of copies of $S$, so $\{ 0 , P, N \}$ is the socle sequence of $N$.
\end{proof}

Next we turn to $R$-projectivity:

\begin{lemma}\label{R-proj}   Let $R$ be as in Definition \ref{bergman}.
\begin{enumerate}
\item A module $M$ is $R$-projective, iff it is projective w.r.t.\ the projection $\pi : R \to R/\Soc{R}$. 
\item The class of all $R$-projective modules is closed under submodules. If $M \in \rmod R$ is $R$-projective, then all countably generated submodules of $M$ are projective. In particular, the Dual Baer Criterion holds for all countably generated modules.
\end{enumerate}
\end{lemma}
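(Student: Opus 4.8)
The plan is to prove (1) first and to derive (2) from (1) together with the von Neumann regularity of $R$.

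\emph{Part (1).} The forward implication is immediate, since $\pi$ is one of the projections defining $R$-projectivity. For the converse, assume $M$ is projective with respect to $\pi : R \to S$, where $S = R/\Soc R$, and let $I$ be a right ideal; we must factor an arbitrary $\varphi \in \Hom RM{R/I}$ through the projection $\pi_I : R \to R/I$. By Lemma \ref{basic}(2) there are two cases. If $I = fR$ for an idempotent $f$, then $R = fR \oplus (1-f)R$, so $\pi_I$ is a split epimorphism and $\varphi$ factors through it trivially. If $I = I_A \subseteq \Soc R$, consider the exact sequence $0 \to (\Soc R)/I_A \to R/I_A \xrightarrow{\rho} S \to 0$; here $\rho \pi_{I_A} = \pi$, and $(\Soc R)/I_A$ is semisimple, being a quotient of $\Soc R$. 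By hypothesis $\rho\varphi = \pi k$ for some $k : M \to R$, so $\rho(\varphi - \pi_{I_A}k) = 0$ and $\varphi - \pi_{I_A}k$ factors as $M \xrightarrow{\psi} (\Soc R)/I_A \hookrightarrow R/I_A$. Since the restriction $\pi_{I_A}|_{\Soc R} : \Soc R \twoheadrightarrow (\Soc R)/I_A$ is a split epimorphism of semisimple modules, $\psi$ lifts through it to some $\ell : M \to \Soc R$; regarding $\ell$ as a map into $R$ gives $\pi_{I_A}\ell = \varphi - \pi_{I_A}k$, hence $\varphi = \pi_{I_A}(k+\ell)$, proving (1).

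\emph{Part (2).} First, $R$-projective modules are closed under submodules: if $N \subseteq M$ with $M$ $R$-projective and $g : N \to S$, then injectivity of $S$ (Lemma \ref{basic}(3)) extends $g$ to $\bar g : M \to S$; since $M$ is $R$-projective, $\bar g = \pi h$ for some $h : M \to R$, whence $g = \pi(h|_N)$, so $N$ is projective with respect to $\pi$ and hence $R$-projective by (1). It remains to prove that a countably generated $R$-projective module $N$ is projective (this yields the statement on countably generated submodules and, since projective modules are always $R$-projective, the Dual Baer Criterion for countably generated modules). Write $N = \bigcup_{m<\omega} N_m$ as the union of an increasing chain of finitely generated submodules. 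Each $N_m$ is $R$-projective by the closure property, hence projective, since finitely generated $R$-projective modules are projective. As $R$ is von Neumann regular (Lemma \ref{basic}(1)) and over a von Neumann regular ring every finitely generated submodule of a projective module is a direct summand, the inclusion $N_m \subseteq N_{m+1}$ yields a decomposition $N_{m+1} = N_m \oplus C_{m+1}$ with $C_{m+1}$ finitely generated projective; inductively $N_m = C_0 \oplus \dots \oplus C_m$ with $C_0 := N_0$, so $N = \bigoplus_{m<\omega} C_m$ is a direct sum of projective modules, hence projective.

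\emph{Main obstacle.} The delicate point is Part (1): $R/I_A$ need not be injective — it is often isomorphic to $R$ — so one cannot factor $\varphi$ by extending maps along inclusions, and the argument must instead push $\varphi$ down to the $S$-quotient $\rho$ and use that the remaining part lands in a semisimple module, where lifting along $\pi_{I_A}$ is automatic. In Part (2) the parallel subtlety is that the socle sequence $0 \to \Soc N \to N \to N/\Soc N \to 0$ generally does not split (already $N = R$ is such an example), so projectivity of a countably generated $R$-projective $N$ must be obtained from the exhaustion argument combined with von Neumann regularity, not by splitting that sequence.
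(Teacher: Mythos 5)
Your proof is correct. Part (1) and the closure-under-submodules half of Part (2) follow essentially the same route as the paper: reduce to the ideals $I_A\subseteq \Soc R$, push a map $M\to R/I_A$ down to $S=R/\Soc R$ via $\rho$, lift that through $\pi$, and absorb the semisimple remainder using a splitting of $\Soc R\twoheadrightarrow \Soc R/I_A$; likewise the submodule closure via injectivity of $S$ is the paper's argument verbatim. The one place you genuinely diverge is the final step of Part (2). The paper disposes of countably generated $R$-projective modules by citing an external result ([T2, Lemma 3.4], which is invoked for $R$ hereditary and von Neumann regular), whereas you give a self-contained exhaustion argument: each finitely generated submodule $N_m$ is $R$-projective by your closure property, hence projective, and Goodearl's theorem that a finitely generated submodule of a projective module over a von Neumann regular ring is a direct summand turns the chain $N_0\subseteq N_1\subseteq\cdots$ into a direct-sum decomposition $N=\bigoplus_m C_m$ of finitely generated projectives. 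Your version is more elementary and avoids the hereditary hypothesis entirely (it uses only von Neumann regularity), at the cost of being specific to the countably generated case, while the cited lemma is part of a more general machinery the paper reuses elsewhere. Both arguments are sound.
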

\begin{proof} (1) First, note that by part (2) of Lemma \ref{basic}, the only ideals $I$ such that $R/I$ is not projective, are of the form $I = I_A$ where $A$ is an infinite subset of $\omega$ (and hence $I \subseteq \Soc{R} = I_{\omega}$). So it suffices to prove that if $M$ is projective w.r.t.\ the projection $\pi : R \to R/\Soc{R}$, then it is projective w.r.t.\ all the projections $\pi_{I_A} : R \to R/I_A$ such that $A \subseteq \omega$ is infinite.

Let $f \in \Hom RM{R/I_A}$. If $\im{f }\subseteq \Soc{R}/I_A$, then there exists a homomorphism $h \in  \Hom R{\Soc{R}/I_A}{\Soc R}$ such that $\pi_{I_A} h = id$, whence $g = hf$ yields a factorization of $f$ through $\pi_{I_A}$. Otherwise, let $\rho : R/I_A \to R/\Soc{R}$ be the projection. By assumption, there is $g \in \Hom RMR$ such that $\rho f = \pi g$. So $\rho (f - \pi_{I_A} g) = 0$, and $\im{f - \pi_{I_A} g} \subseteq \Soc{R}/I_A$. Then $f - \pi_{I_A} g$ factorizes through $\pi_{I_A}$ by the above, and so does $f$.    

(2) The closure of the class of all $R$-projective modules under submodules follows from part (1) and from the injectivity of $S = R/\Soc R$ (see part (3) of Lemma \ref{basic}). So it only remains to prove that each countably generated $R$-projective module is projective. However, as remarked above, for any ring $R$, each finitely generated $R$-projective module is projective. Since $R$ is hereditary and von Neumann regular, \cite[Lemma 3.4]{T2} applies and gives that also all countably generated $R$-projective modules are projective. 
\end{proof}

\medskip
We finish this section by presenting two more classes of non-right perfect rings over which small modules satisfy the Dual Baer Criterion.

In both cases, the rings will be von Neumann regular and right self-injective. Apart from classic facts about these rings from \cite[\S10]{G}, we will also need the following easy observation (valid for any right self-injective ring $R$, see \cite[Proposition 2.6]{AIPY}): a module $M$ is $R$-projective, iff $\Ext 1RMI = 0$ for each right ideal $I$ of $R$.       

\begin{example}\label{commutat} 
Let $R$ be a right self-injective von Neumann regular ring such that $R$ has primitive factors artinian, but $R$ is not artinian (e.g., let $R$ be an infinite direct product of skew-fields). Then all $R$-projective modules are non-singular, and the Dual Baer Criterion holds for all countably generated modules. 

For the first claim, let $M$ be $R$-projective and assume there is an essential right ideal $I \subsetneq R$ such that $R/I$ embeds into $M$.
Let $J$ be a maximal right ideal containing $I$. By \cite[Proposition 6.18]{G}, the simple module $R/J$ is injective, so the projection $\rho : R/I \to R/J$ extends to some $f \in \Hom RM{R/J}$. The $R$-projectivity of $M$ yields $g \in \Hom RMR$ such that $f = \pi g$ where $\pi: R \to R/J$ is the projection. Then $g$ restricts to a non-zero homomorphism from $R/I$ into the non-singular module $R$, a contradiction. Thus, $M$ is non-singular.

For the second claim, we recall from \cite[Example 6.8]{HT}, that for von Neumann regular right self-injective rings, non-singular modules coincide with the (flat) Mittag-Leffler ones. However, each countably generated flat Mittag-Leffler module (over any ring) is projective, see e.g.\ \cite[Corollary 3.19]{GT}. Thus each countably generated $R$-projective module is projective.  
\end{example}     

\begin{example}\label{pureinf} Let $R$ be a von Neumann regular right self-injective ring which is \emph{purely infinite} in the sense of \cite[Definition on p.116]{G}. That is, there exists no central idempotent $0 \neq e \in R$ such that the ring $eRe$ is directly finite (where a ring $R$ is \emph{directly finite} in case $xy = 1$ implies $yx = 1$ for all $x, y \in R$.) 

For example, the endomorphism ring of any infinite dimensional right vector space over a skew-field has this property, see \cite[p. 116]{G}.   

We claim that the Dual Baer Criterion holds for all $\leq 2^{\aleph_0}$-presented modules $M$ of projective dimension $\leq 1$. Indeed, assume that such module $M$ is $R$-projective. By \cite[Theorem 10.19]{G}, $R$ contains a right ideal $J$ which is a free module of rank $2^{\aleph_0}$. If the projective dimension of $M$ equals $1$, then there is a non-split presentation $0 \to K \to L \to M \to 0$ where $K$ and $L$ are free of rank $\leq 2^{\aleph_0}$. Thus $\Ext 1RMJ \neq 0$, in contradiction with the $R$-projectivity of $M$. This shows that $M$ is projective.  

In particular, if the global dimension of $R$ is $2$, and all right ideals of $R$ are $\leq 2^{\aleph_0}$-presented (which is the case when $R$ is the endomorphism ring of a vector space of dimension $\aleph_0$ over a field of cardinality $\leq 2^{\aleph_0}$ under CH - see \cite{O}), then the Dual Baer Criterion holds for all ideals of $R$.  
\end{example}

\begin{remark} As mentioned in the Introduction, for any non-right perfect ring $R$, Shelah's Uniformization Principle UP$_\kappa$ (for an uncountable cardinal $\kappa$ such that $\card R < \kappa$ and $\cf{\kappa} = \aleph_0$) and GCH imply the existence of a $\kappa^+$-presented $R$-projective module $N$ of projective dimension equal to $1$. 

If we choose $R$ to be the endomorphism ring of a vector space of dimension $< \aleph_{\omega}$ over a field of cardinality $< \aleph_{\omega}$, then we can take the minimal choice, $\kappa = \aleph_{\omega}$, so the module $N$ above can be chosen $\aleph_{\omega}^+$-presented. Example \ref{pureinf} gives a lower bound for the possible size of $N$: it has to be $> 2^{\aleph_0}$-presented.               
\end{remark}

\section{The consistency of existence of non-perfect testing rings}\label{Faith}

In this section, we return to the setting of Definition \ref{bergman}, so $K$ will denote a field, and $R$ the subalgebra of $K^\omega$ consisting of all eventually constant sequences in $K^\omega$.  In order to prove that it is consistent with ZFC that $R$ is testing, we will employ the notion of Jensen-functions, cf.\ \cite{J} and \cite[\S 18.2]{GT}:

\begin{definition}\label{reg} Let $\kappa$ be a regular uncountable cardinal. 
\begin{enumerate} 
\item A subset $C \subseteq \kappa$ is called a \emph{club} provided that $C$ is \emph{closed} in $\kappa$ (i.e., $\hbox{sup}(D) \in C$ for each subset $D \subseteq C$ such that $\hbox{sup}(D) < \kappa$) and $C$ is \emph{unbounded} (i.e., $\hbox{sup}(C) = \kappa$). Equivalently, there exists a strictly increasing continuous function $f : \kappa \to \kappa$ whose image is $C$. 
\item A subset $E \subseteq \kappa$ is \emph{stationary} provided that $E \cap C \neq \emptyset$ for each club $C \subseteq \kappa$.
\item Let $A$ be a set of cardinality $\leq \kappa$. An increasing continuous chain, $\{ A_\alpha \mid \alpha < \kappa \}$, consisting of subsets of $A$ of cardinality $< \kappa$ such that $A_0 = 0$ and $A = \bigcup_{\alpha < \kappa} A_\alpha$, is called a \emph{$\kappa$-filtration} of the set $A$. 
\item Let $E$ be a stationary subset of $\kappa$. Let $A$ and $B$ be sets of cardinality $\leq \kappa$. Let $\{ A_\alpha \mid \alpha < \kappa \}$ and $\{ B_\alpha \mid \alpha < \kappa \}$ ) be $\kappa$-filtrations of $A$ and $B$, respectively. For each $\alpha < \kappa$, let $c_\alpha: A_\alpha \to B_\alpha$ be a map. Then $( c_\alpha \mid \alpha < \kappa )$ are called \emph{Jensen-functions} provided that for each map $c : A \to B$, the set $E(c) = \{ \alpha \in E \mid c \restriction A_\alpha = c_\alpha \}$ is stationary in $\kappa$.
\end{enumerate}
\end{definition}

Jensen \cite{J} proved the following (cf.\ \cite[Theorem 18.9]{GT})

\begin{theorem}\label{jensen} Assume G\" odel's Axiom of Constructibility (V = L). Let $\kappa$ be a regular infinite cardinal, $E \subseteq \kappa$ a stationary subset of $\kappa$, and $A$ and $B$ sets of cardinality $\leq \kappa$. Let $\{ A_\alpha \mid \alpha < \kappa \}$ and $\{ B_\alpha \mid \alpha < \kappa \}$ ) be $\kappa$-filtrations of $A$ and $B$, respectively. Then there exist Jensen-functions $( c_\alpha \mid \alpha < \kappa )$.
\end{theorem}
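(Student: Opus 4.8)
The plan is to reduce the statement to Jensen's diamond principle $\diamondsuit_E$ and then to establish $\diamondsuit_E$ in $L$ by the classical condensation argument. Recall that $\diamondsuit_E$ asserts the existence of a sequence $(S_\alpha \mid \alpha < \kappa)$ with $S_\alpha \subseteq \alpha$ such that for every $X \subseteq \kappa$ the set $\{\alpha \in E \mid X \cap \alpha = S_\alpha\}$ is stationary. For the reduction, I would first fix surjections $a : \kappa \to A$ and $b : \kappa \to B$ with $a[\alpha] = A_\alpha$ and $b[\alpha] = B_\alpha$ for all $\alpha$ in a club $C_0$ (such surjections exist because any two $\kappa$-filtrations of a set agree on a club), together with a bijection $p : \kappa \times \kappa \to \kappa$ with $p[\alpha \times \alpha] = \alpha$ for all $\alpha$ in a club $C_1$. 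Each $c : A \to B$ is then coded by the set $X_c = p\bigl[\{(\xi,\eta) \mid c(a(\xi)) = b(\eta)\}\bigr] \subseteq \kappa$.

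Given a $\diamondsuit_E$-sequence $(S_\alpha \mid \alpha < \kappa)$, I would set $c_\alpha : A_\alpha \to B_\alpha$ to be the map decoded from $S_\alpha$ via $a \restriction \alpha$ and $b \restriction \alpha$ whenever $\alpha \in C_0 \cap C_1$ and this decoding yields a well-defined map $A_\alpha \to B_\alpha$, and a fixed constant map otherwise. To verify that these are Jensen-functions, take any $c : A \to B$: the set $\{\alpha < \kappa \mid c[A_\alpha] \subseteq B_\alpha\}$ is closed by continuity of the filtrations and unbounded since $\kappa$ is regular, hence contains a club, and intersecting it with $C_0 \cap C_1$ gives a club $C$ on which the equality $X_c \cap \alpha = S_\alpha$ forces $c_\alpha = c \restriction A_\alpha$. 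Since $\{\alpha \in E \mid X_c \cap \alpha = S_\alpha\}$ is stationary and $C$ is a club, $E(c) \supseteq \{\alpha \in E \mid X_c \cap \alpha = S_\alpha\} \cap C$ is stationary, as required.

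To prove $\diamondsuit_E$ assuming $V = L$, I would define in $L$, by recursion on $\alpha < \kappa$, a pair $(S_\alpha, C_\alpha)$ to be the $<_L$-least pair such that $S_\alpha \subseteq \alpha$, $C_\alpha$ is a club in $\alpha$, and $S_\alpha \cap \xi \neq S_\xi$ for every $\xi \in C_\alpha \cap E$ — if such a pair exists — and, say, $(\emptyset, \alpha)$ otherwise. If $(S_\alpha \mid \alpha < \kappa)$ were not a $\diamondsuit_E$-sequence, let $(S, C)$ be the $<_L$-least witnessing pair. Fixing a large enough regular $\theta$ and using the L\"owenheim--Skolem theorem, one builds $N \prec (L_\theta, \in, <_L)$ with $\kappa, S, C, E, (S_\alpha \mid \alpha < \kappa) \in N$, $|N| < \kappa$, and $N \cap \kappa = \delta$ an ordinal; as the admissible $\delta$ contain a club and $C \cap E$ is stationary, one may take $\delta \in C \cap E$. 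By G\"odel's condensation lemma the transitive collapse of $N$ is some $L_{\bar\theta}$, and the collapse map fixes $\delta$ pointwise and sends $\kappa \mapsto \delta$, $S \mapsto S \cap \delta$, $C \mapsto C \cap \delta$, $E \mapsto E \cap \delta$, $(S_\alpha \mid \alpha < \kappa) \mapsto (S_\alpha \mid \alpha < \delta)$, and $<_L \mapsto {<_L} \restriction L_{\bar\theta}$. Elementarity then yields that $L_{\bar\theta}$ sees $(S \cap \delta, C \cap \delta)$ as the $<_L$-least pair with the defining counterexample properties at level $\delta$ (relative to $(S_\alpha \mid \alpha < \delta)$ and $E \cap \delta$). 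But $(S \cap \delta, C \cap \delta)$ genuinely has these properties in $L$ — since $\delta \in C$ makes $C \cap \delta$ a club in $\delta$, and $S \cap \xi \neq S_\xi$ for all $\xi \in C \cap E$ — while every pair $<_L$-below it already lies in $L_{\bar\theta}$, because the $<_L$-predecessors of a member of $L_{\bar\theta}$ occur at a bounded level of the $L$-hierarchy; as the counterexample property is absolute between $L_{\bar\theta}$ and $L$, the recursion produces $S_\delta = S \cap \delta$. Since $\delta \in C \cap E$, this contradicts the choice of $(S, C)$.

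The main obstacle is the condensation argument of the last paragraph: arranging the elementary submodel so that $N \cap \kappa$ falls in $C \cap E$, and then checking the absoluteness that forces $S_\delta = S \cap \delta$ — this is exactly the point where G\"odel's condensation lemma enters. The coding reduction in the first two paragraphs is routine, but care is needed to make all the decoding maps well defined and to verify that the relevant coherence sets contain clubs.
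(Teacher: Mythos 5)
The paper offers no proof of this statement: it is quoted as Jensen's theorem with a pointer to \cite{J} and to \cite[Theorem 18.9]{GT}, so there is nothing internal to compare your argument against. What you give is the standard proof found in those sources, and it is essentially correct: you reduce the existence of Jensen-functions to $\diamondsuit_E(\kappa)$ by coding a map $c : A \to B$ as a subset of $\kappa$ via surjections $a, b$ compatible with the filtrations on a club and a pairing function, and then you prove $\diamondsuit_E$ in $L$ by the classical argument (define the sequence by taking $<_L$-least counterexample pairs, and kill a putative global counterexample $(S,C)$ by condensing an elementary submodel $N \prec L_\theta$ with $N \cap \kappa = \delta \in C \cap E$, using that $<_L$ restricted to $L_{\bar\theta}$ is an initial segment of $<_L$ and that the defining properties are absolute). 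The reduction is exactly the equivalence of $\diamondsuit_E$ with the prediction-of-functions form used in \cite[\S VI.1]{EM} and \cite[\S 18.2]{GT}, so you have not taken a different route so much as supplied the omitted one. Two small points worth tightening: the theorem should say \emph{regular uncountable} (as in Definition \ref{reg} and \cite[Theorem 18.9]{GT}) --- for $\kappa = \aleph_0$ the club/stationary apparatus degenerates and the argument does not apply; and the degenerate cases $A = \emptyset$ or $B = \emptyset$, as well as non-limit $\alpha$ in the recursion (where no club in $\alpha$ exists and the default clause must fire), should be dispatched explicitly, though none of this affects the substance.
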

                 
Now, we can prove our main result:								

\begin{theorem}\label{independence} Assume V = L. Let $K$ be a field of cardinality $\leq 2^{\omega}$. Then all $R$-projective modules are projective.
\end{theorem}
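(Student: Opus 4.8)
The plan is a minimal-counterexample argument powered by the prediction principle of Theorem~\ref{jensen}. Suppose some $R$-projective module were not projective, and let $M$ be such a module of least cardinality $\kappa$. A chain of reductions cuts $M$ down to size. Finitely generated $R$-projective modules are projective, and so (Lemma~\ref{R-proj}(2)) are countably generated ones, so $\kappa>\aleph_0$; the same Lemma makes the class of $R$-projective modules closed under submodules, hence under direct summands, so since $S=R/\Soc R$ is cyclic and not projective — thus not $R$-projective — $M$ has no direct summand $\cong S$. By Lemma~\ref{basic}(4), $\Soc M$ is then projective (a direct sum of copies of the simple projectives $e_iR$) and $M/\Soc M\cong S^{(\lambda)}$. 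Lifting a $K$-basis of $M/\Soc M$ to $M$ and splitting off a semisimple — hence projective — complement of the socle of the submodule it generates, one sees $M$ is projective iff that submodule is, so by minimality $\lambda=\kappa$. Finally $\kappa$ is regular: otherwise every $<\kappa$-generated submodule of $M$ is $R$-projective, hence projective by minimality, hence lies in a $<\kappa$-generated projective submodule, and Shelah's singular compactness theorem for projectivity (see \cite[Ch.~IV]{EM}) forces $M$ projective. So $\kappa$ is regular uncountable and $M/\Soc M\cong S^{(\kappa)}$.

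Fix a $\kappa$-filtration $M=\bigcup_{\alpha<\kappa}M_\alpha$ by submodules of cardinality $<\kappa$, continuous, $M_0=0$, each $M_{\alpha+1}$ countably generated over $M_\alpha$, and chosen by the usual closing-off so the filtration ``sees'' the obstructions; each $M_\alpha$, being a submodule of $M$, is $R$-projective, hence projective. A countably generated non-projective $R$-module always contains a cyclic submodule $\cong R/I_A$ for some infinite $A\subseteq\omega$ (if its socle has an $S$-part take $A=\omega$; otherwise, by the structure theory and the hereditariness of $R$, a suitable lift of a basis vector of the semisimple quotient modulo socle works). Hence for $\alpha$ in the set $E$ of bad stages — those with $M_{\alpha+1}/M_\alpha$ not projective — there are $w_\alpha\in M_{\alpha+1}\setminus M_\alpha$ and infinite $A_\alpha\subseteq\omega$ with $\Ann{w_\alpha+M_\alpha}=I_{A_\alpha}$, so in particular $w_\alpha e_i\in M_\alpha$ exactly when $i\in A_\alpha$. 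A $\Gamma$-invariant computation (as in \cite[Ch.~IV]{EM}, \cite[\S18.2]{GT}) shows that, $M$ being non-projective, $E$ is stationary in $\kappa$.

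By Lemma~\ref{R-proj}(1), $M$ is $R$-projective iff $\Hom RMR\to\Hom RMS$ is onto, so it suffices to build $f\in\Hom RMS$ admitting no lift. A lift $g : M\to R\subseteq K^\omega$ is a sequence of homomorphisms $g_i : M\to e_iR$ with $(g_i(m))_{i<\omega}$ eventually constant with eventual value $f(m)$ for each $m$; for $\alpha\in E$, since $w_\alpha e_i\in M_\alpha$ for all $i\in A_\alpha$, the coordinates $g(w_\alpha)_i=g_i(w_\alpha e_i)$ with $i\in A_\alpha$ are determined by $g\restriction M_\alpha$, and as $A_\alpha$ is infinite (hence cofinal in $\omega$) they force $(g(w_\alpha)_i\mid i\in A_\alpha)$ to be eventually constant along $A_\alpha$ with eventual value $f(w_\alpha)$. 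Now apply Theorem~\ref{jensen} with the set $A:=M$, the set $B:=R$ (here $\card R=\max(\card K,\aleph_0)\le 2^{\aleph_0}=\aleph_1\le\kappa$ by $V=L$ — the only use of the hypothesis $\card K\le 2^{\omega}$), the filtration $(M_\alpha)_{\alpha<\kappa}$ of $A$, any $\kappa$-filtration of $B$, and the stationary set $E$; let $(c_\alpha\mid\alpha<\kappa)$ be the resulting Jensen-functions, $c_\alpha : M_\alpha\to R$. Build $f$ by recursion on $\alpha$, taking unions at limits; at stages $\notin E$, and at $\alpha\in E$ for which $c_\alpha$ is not a homomorphism lifting $f\restriction M_\alpha$, extend $f$ over $M_{\alpha+1}$ by any homomorphism extending $f\restriction M_\alpha$ (possible as $S$ is injective, Lemma~\ref{basic}(3)); at $\alpha\in E$ where $c_\alpha$ is such a lift, inspect the sequence $(\,[c_\alpha(w_\alpha e_i)]_i \mid i\in A_\alpha\,)$ — if it is not eventually constant along $A_\alpha$, extend $f$ arbitrarily (no lift of $f$ restricting to $c_\alpha$ can exist over $w_\alpha$); otherwise, with $v_\alpha\in S$ its eventual value, note $w_\alpha\notin\Soc M+M_\alpha$ (else $(w_\alpha+M_\alpha)R$, a submodule of the semisimple module $(\Soc M+M_\alpha)/M_\alpha$, would be semisimple, whereas $R/I_{A_\alpha}$ is not), so that the image of $w_\alpha$ in $M/\Soc M$ is $K$-linearly independent from the part of $f$ already defined, choose $f(w_\alpha)\ne v_\alpha$, and extend $f$ over the rest of $M_{\alpha+1}$. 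Finally, suppose $g : M\to R$ lifts $f$: by the defining property of Jensen-functions applied to the map $g$, the set $\{\alpha\in E\mid g\restriction M_\alpha=c_\alpha\}$ is stationary, in particular nonempty, and for such an $\alpha$, $g$ restricts to $c_\alpha$ over $M_\alpha$ and lifts $f$ over $w_\alpha\in M_{\alpha+1}$, contradicting the choice made at stage $\alpha$. Hence $f$ has no lift, contradicting the $R$-projectivity of $M$, and the theorem follows.

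I expect two steps to carry the real weight. The first is the stationarity of $E$: transporting the $\Gamma$-invariant machinery to $R$ requires pinning down exactly when a $<\kappa$-generated submodule of the non-projective $M$ fails to extend to a $<\kappa$-generated projective direct summand and verifying the dichotomy ``$\kappa$-projective iff $\Gamma$-invariant trivial'' in this module category — together with the auxiliary fact that every countably generated non-projective $R$-module contains a cyclic copy of some $R/I_A$, $A$ infinite, which is where the specific structure of $R$ (hereditary, von Neumann regular, socle $\bigoplus_i e_iR$ with simple projective factors, $R/\Soc R\cong S$) does its work. The second is checking that the local defeat genuinely obstructs \emph{every} lift extending $c_\alpha$, and that the filtration can be arranged so that each bad step is witnessed by a single cyclic element already present at level $\alpha+1$. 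By comparison, the reductions of the first paragraph, the translation via Lemma~\ref{R-proj}(1), and the bare invocation of Theorem~\ref{jensen} are routine.
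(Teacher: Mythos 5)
Your overall architecture coincides with the paper's: reduce to a regular uncountable number of generators $\kappa$ (countably generated case by Lemma \ref{R-proj}(2), singular case by singular compactness), filter $M$, isolate the set $E$ of bad stages, and use the Jensen-functions of Theorem \ref{jensen} to build a map $M\to S$ defeating every potential lift, contradicting $R$-projectivity via Lemma \ref{R-proj}(1). Where you diverge is the local sabotage at a bad stage: you want a single element $w_\alpha\in M_{\alpha+1}$ with $\Ann{w_\alpha+M_\alpha}=I_{A_\alpha}$, $A_\alpha$ infinite, so that the predicted partial lift $c_\alpha$ determines the coordinates of any lift at $w_\alpha$ along the cofinal set $A_\alpha$ and hence determines $f(w_\alpha)$. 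That mechanism is sound \emph{when $w_\alpha$ exists}, but the auxiliary lemma it rests on --- every countably generated non-projective $R$-module contains a cyclic submodule $\cong R/I_A$ with $A$ infinite --- is false, and this is a genuine gap.

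Counterexample: let $K$ be infinite, pick pairwise distinct nonzero $\sigma_i\in K$, let $\sigma\colon\Soc{R}\to\Soc{R}$ multiply the $i$th coordinate by $\sigma_i$ (an $R$-linear map), and put $N=R^2/L$ with $L=\{(s,-\sigma(s))\mid s\in\Soc{R}\}$. Then $L\cong\Soc{R}$ is not finitely generated, hence not a direct summand of $R^2$, so $N$ is a $2$-generated non-projective module; yet for $(r_1,r_2)\in R^2\setminus\Soc{R^2}$ with eventual values $(a,b)\neq(0,0)$, the annihilator of its image in $N$ is $\{t\in R\mid t_i\bigl((r_2)_i+\sigma_i(r_1)_i\bigr)=0 \text{ for all } i\}=I_B$ with $B$ finite, because for large $i$ the coefficient is $b+\sigma_i a$, which vanishes for at most one $i$. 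So every cyclic submodule of $N$ is projective, no $w$ as above exists, and the obstruction to lifting maps $N\to S$ is genuinely non-cyclic; moreover such an $N$ does occur as $M_{\alpha+1}/M_\alpha$ with both terms projective, so your recursion is undefined at such stages. The paper's remedy is to sabotage abstractly: since $M_{\alpha+1}/M_\alpha$ is not $R$-projective, Lemma \ref{R-proj}(1) yields $f_{\alpha+1}\in\Hom R{M_{\alpha+1}/M_\alpha}{S}$ not factoring through $\pi$, and one sets $g_{\alpha+1}=h_{\alpha+1}+f_{\alpha+1}\rho_{\alpha+1}$ where $h_{\alpha+1}$ is the extension predicted by $c_\alpha$; any global lift agreeing with $c_\alpha$ on the chosen generators of $M_\alpha$ then differs from the predicted lift by a map factoring through $M_{\alpha+1}/M_\alpha$, exhibiting $f_{\alpha+1}$ as factoring through $\pi$. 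Replacing your $w_\alpha$-step by this, and defining $E$ via non-$R$-projectivity of the quotients, repairs the argument; it also removes the tension between your two demands on the filtration (countably generated successor steps versus the closing-off needed to conclude projectivity when $E$ is non-stationary), which cannot in general be met simultaneously.
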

\begin{proof} Let $M$ be an $R$-projective module. By induction on the minimal number of generators, $\kappa$, of $M$, we will prove that
$M$ is projective. For $\kappa \leq \aleph_0$, we appeal to part (2) of Lemma \ref{R-proj}, and for $\kappa$ a singular cardinal, we apply \cite[Corollary 3.11]{T2}.  

Assume $\kappa$ is a regular uncountable cardinal. Let $G = \{ m_\alpha \mid \alpha < \kappa \}$ be a minimal set of $R$-generators of $M$. For each $\alpha < \kappa$, let $G_\alpha = \{ m_\beta \mid \beta < \alpha \}$. Let $M_\alpha$ be the submodule of $M$ generated by $G_\alpha$. Then $\mathcal M = (M_\alpha \mid \alpha < \kappa )$ is a $\kappa$-filtration of the module $M$. Possibly skipping some terms of $\mathcal M$, we can w.l.o.g.\ assume that $\mathcal M$ has the following property for each $\alpha < \kappa$: 
if $M_{\beta}/M_\alpha$ is not $R$-projective for some $\alpha < \beta < \kappa$, then also $M_{\alpha + 1}/M_\alpha$ is not $R$-projective. 
Let $E$ be the set of all $\alpha < \kappa$ such that $M_{\alpha + 1}/M_\alpha$ is not $R$-projective.

We claim that $E$ is not stationary in $\kappa$. If our claim is true, then there is a club $C$ in $\kappa$ such that $C \cap E = \emptyset$. Let $f : \kappa \to \kappa$ be a strictly increasing continuous function whose image is $C$. Then $M_{f(\alpha + 1)}/M_{f(\alpha )}$ is $R$-projective for each $\alpha < \kappa$. By the inductive premise, $M_{f(\alpha + 1)}/M_f(\alpha )$ is projective for all $\alpha < \kappa$, whence $M$ is projective, too.

Assume our claim is not true. We will make use of Theorem \ref{jensen} in the following setting. We let $A = G$ and $B = R$. The relevant $\kappa$-filtration of $A$ will be $( G_\alpha \mid \alpha < \kappa)$. For $B$, we consider any $\kappa$-filtration $( R_\alpha \mid \alpha < \kappa )$ of the additive group $(R,+)$ consisting of subgroups of $(R,+) $ (which exists since $\card K \leq \aleph_1$ implies $\card R \leq \aleph_1 \leq \kappa$; if $\card{K}$ is countable, the filtration can even be taken constant $= R$). By Theorem \ref{jensen}, there exist Jensen-functions $c_\alpha : G_\alpha \to R_\alpha$ ($\alpha < \kappa$) such that for each function $c: G \to R$, the set $E(c) = \{ \alpha \in E \mid c_\alpha = c \restriction G_\alpha \}$ is stationary in $\kappa$.
 
By induction on $\alpha < \kappa$, we will define a sequence $( g_\alpha \mid \alpha < \kappa )$ such that $g_\alpha \in \Hom R{M_\alpha}S$ as follows: $g_0 = 0$; if $\alpha < \kappa$ and $g_\alpha$ is defined, we distinguish two cases: 

(I) $\alpha \in E$, and there exist $h_{\alpha +1} \in \Hom R{M_{\alpha+1}}S$ and $y_{\alpha +1} \in \Hom R{M_{\alpha+1}}R$, such that $h_{\alpha +1} \restriction M_\alpha = g_\alpha$, $h_{\alpha +1} = \pi y_{\alpha +1}$ and $y_{\alpha +1} \restriction G_{\alpha} = c_\alpha$. In this case we define $g_{\alpha +1} = h_{\alpha +1} + f_{\alpha +1} \rho_{\alpha +1}$, where $\rho_{\alpha +1} : M_{\alpha +1} \to M_{\alpha +1}/M_\alpha$ is the projection and $f_{\alpha +1} \in \Hom R{M_{\alpha+1}/M_\alpha}S$ is chosen so that it does not factorize through $\pi$ (such $f_{\alpha +1}$ exists because $\alpha \in E$ by part (1) of Lemma \ref{R-proj}. Note that $g_{\alpha+1} \restriction M_\alpha = h_{\alpha +1} \restriction M_\alpha = g_\alpha$. 

(II) otherwise. In this case, we let $g_{\alpha+1} \in \Hom R{M_{\alpha+1}}S$ be any extension of $g_\alpha$ to $M_{\alpha +1}$ (which exists by the injectivity of $S$).

If $\alpha < \kappa$ is a limit ordinal, we let $g_\alpha = \bigcup_{\beta < \alpha} g_\beta$. Finally, we define $g = \bigcup_{\alpha < \kappa} g_\alpha$. We will prove that $g$ does not factorize through $\pi$. This will contradict the $R$-projectivity of $M$, and prove our claim.

Assume there is $x \in \Hom RMR$ such that $g = \pi x$. Then the set of all $\alpha < \kappa$ such that $x \restriction G_{\alpha}$ maps into $R_\alpha$ is closed and unbounded in $\kappa$, so it contains some element $\alpha \in E(x \restriction G)$. For such $\alpha$, we have $g_{\alpha +1} = \pi x \restriction M_{\alpha + 1}$ and $x \restriction G_{\alpha} = c_\alpha$, so $\alpha$ is in case (I) (this is witnessed by taking $h_{\alpha + 1} = g_{\alpha + 1}$ and $y_{\alpha + 1} = x \restriction M_{\alpha +1}$).  

Let $z_{\alpha +1} = x \restriction M_{\alpha + 1} - y_{\alpha +1}$. Then $z_{\alpha +1} \restriction G_{\alpha} = x \restriction G_{\alpha} - y_{\alpha +1} \restriction G_\alpha = c_\alpha - c_\alpha = 0$. So there exists $u_{\alpha +1} \in \Hom R{M_{\alpha +1}/M_\alpha}R$ such that $z_{\alpha +1} = u_{\alpha +1} \rho_{\alpha +1}$. Moreover, 
$$\pi u_{\alpha +1} \rho_{\alpha +1} = \pi z_{\alpha +1} = \pi x \restriction M_{\alpha + 1} - \pi y_{\alpha +1} = g_{\alpha + 1} - h_{\alpha +1} = f_{\alpha +1} \rho_{\alpha +1}.$$
Since $\rho_{\alpha +1}$ is surjective, we conclude that $\pi u_{\alpha +1} = f_{\alpha +1}$, in contradiction with our choice of the homomorphism $f_{\alpha +1}$.  
\end{proof}           

\begin{corollary}\label{undec} Let $K$ be a field of cardinality $\leq 2^{\omega}$. Then the statement {\lq}$R$ is a testing ring{\rq} is independent of ZFC + GCH. Hence Faith's problem is undecidable in ZFC + GCH.
\end{corollary}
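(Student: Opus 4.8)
The plan is to establish the independence by producing two models of ZFC + GCH, in one of which every $R$-projective module is projective and in the other of which there is an $R$-projective non-projective module, and then to invoke the fact that the ring $R$ of Definition \ref{bergman} is never right perfect. Since every model of GCH satisfies $2^\omega = \aleph_1$, the field $K$ (and hence $R$, which then has cardinality $\le \aleph_1$) makes sense verbatim in both models, so this really is an independence statement about one and the same ring.

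For the first model I would take G\"odel's constructible universe $L$. It satisfies ZFC + GCH, in particular $\card K \le 2^\omega = \aleph_1$, so Theorem \ref{independence} applies and shows that in $L$ every $R$-projective module is projective; that is, $R$ is testing. Hence ``$R$ is testing'' is consistent with ZFC + GCH.

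For the second model I would use the extension of ZFC + GCH, due to Eklof and Shelah \cite{ES}, in which Shelah's Uniformization Principle UP$_\kappa$ holds for every uncountable cardinal $\kappa$ with $\cf\kappa = \aleph_0$; as recalled in the Introduction, in any such model all right testing rings are right perfect. But $R$ is not perfect: by Definition \ref{bergman} it contains the infinite set $\{e_i \mid i < \omega\}$ of pairwise orthogonal idempotents. Therefore $R$ is not testing in this model --- equivalently, by \cite[Lemma 2.4]{T2} (or \cite{T1}), already UP$_{\aleph_\omega}$ together with GCH yields an $\aleph_\omega^+$-presented $R$-projective module of projective dimension $1$ which is not projective, the choice $\kappa = \aleph_\omega$ being legitimate because $\card R \le \aleph_1 < \aleph_\omega$ under GCH. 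Consequently ``$R$ is testing'' is also consistent with its negation over ZFC + GCH, so it is independent of ZFC + GCH.

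Finally, since $R$ is in neither model right perfect, the assertion ``there is a non-right-perfect right testing ring'' is true in $L$ and false in the model of \cite{ES}; thus Faith's problem of characterizing the non-right-perfect right testing rings has no ZFC + GCH answer, i.e.\ it is undecidable in ZFC + GCH. I do not expect a genuine obstacle here: the real content is already contained in Theorem \ref{independence} and in the cited consistency theorems, and the only point requiring (routine) care is the cardinal bookkeeping --- checking that $K$, hence $R$, has size $\le \aleph_1$ in every model of GCH so that $\kappa = \aleph_\omega$ meets the hypotheses $\card R < \kappa$ and $\cf\kappa = \aleph_0$ of the uniformization results quoted from the Introduction.
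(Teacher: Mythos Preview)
Your proposal is correct and follows essentially the same route as the paper's proof: one direction via $V=L$ and Theorem~\ref{independence}, the other via the Eklof--Shelah model with UP$_\kappa$ and \cite[Lemma~2.4]{T2}. You supply more detail than the paper (the explicit choice $\kappa=\aleph_\omega$, the verification that $\card R\le\aleph_1$ under GCH, and the reason $R$ is not perfect), but the structure and the cited ingredients are the same.
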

\begin{proof} Assume UP$_\kappa$ for some $\kappa$ such that $\card R < \kappa$ and $\cf{\kappa} = \aleph_0$. Then $R$ is not testing by 
\cite[Lemma 2.4]{T2} (see also \cite[Theorem 2.7]{AIPY}).

Assume V = L. Then $R$ is testing by Theorems \ref{jensen} and \ref{independence}.
\end{proof}

{\bf Acknowledgement:} I owe my thanks to Gena Puninski for drawing my attention to Faith's problem, and for sharing his manuscript \cite{P} (later incorporated in \cite{AIPY}). I deeply regret Gena's sudden departure in April 2017.

\end{document}